\newtheorem{teo}{Theorem}[section]
\newtheorem{lem}[teo]{Lemma}
\newtheorem{prop}[teo]{Proposition}
\theoremstyle{definition}
\newtheorem{rmk}[teo]{Remark}
\newcommand{\V}{{\mathcal V}}
\DeclareMathOperator{\fix}{fix}
\newcommand{\R}{\mathbb R}
\newcommand{\Z}{\mathbb Z}
\newcommand{\N}{\mathbb N}
\renewcommand{\epsilon}{\varepsilon}
\DeclareMathOperator{\diam}{diam}
\DeclareMathOperator{\dista}{dist} 
\DeclareMathOperator{\clos}{clos}
\title{Generic homeomorphisms with shadowing of one-dimensional continua}
\date{24 May 2019}
\author[A.~Artigue]{Alfonso Artigue}
\address{Departamento de Matemática y Estadística del Litoral, Universidad de la Rep\'ublica, Salto, Uruguay.}
\email{artigue@unorte.edu.uy}
\author[G.~Cousillas]{Gonzalo Cousillas}
\address{Instituto de Matemática y Estadística ``Rafael Laguardia'',
Facultad de Ingeniería, Universidad de la Rep\'ublica,
Montevideo, Uruguay.}
\email{gcousillas@fing.edu.uy}
\begin{document}
\begin{abstract}
	In this article we show that there are homeomorphisms of 
	plane continua whose conjugacy class is residual and have the shadowing property.
	\end{abstract}
\maketitle

\section{Introduction}
Let $(X,\dista)$ be a compact metric space and 
denote by $\mathcal{H}(X)$ the space of homeomorphisms $f\colon X\to X$ with the $C^0$ distance 
\[
 \dista_{C^0}(f,g)=\sup\{\dista(f(x),g(x)),\dista(f^{-1}(x),g^{-1}(x)):x\in X\}.
\]
A property is said to be \emph{generic} if it holds on a residual subset of $\mathcal{H}(X)$. 
Recall that a set is $G_\delta$ if it is a countable intersection of open sets and 
it is \emph{residual} if it contains a dense $G_\delta$ subset. 
For instance, it is known that the shadowing property is generic for $X$ a compact manifold 
\cite[Theorem 1]{PiPl} or a Cantor set \cite[Theorem 4.3]{BeDa}. 
Recall that $f\in\mathcal{H}(X)$ has the \emph{shadowing property} if 
for all $\epsilon>0$ there is $\delta>0$ such that if $\{x_i\}_{i\in\Z}$ is a $\delta$-pseudo orbit then there is 
$y\in X$ such that $\dista(f^i(y),x_i)<\epsilon$ for all $i\in\Z$. 
We say that $\{x_i\}_{i\in\Z}$ is a $\delta$-\emph{pseudo orbit} if $\dista(f(x_i),x_{i+1})<\delta$ for all $i\in\Z$.

A remarkable result, proved in \cite{AGW,KeRo}, states that 
if $X$ is a Cantor set then there is a homeomorphism of $X$ whose conjugacy class is a dense $G_\delta$ subset of $\mathcal{H}(X)$. 
That is, a generic homeomorphism of a Cantor set is conjugate to this special homeomorphism. 
We say that $f,g\in\mathcal{H}(X)$ are \emph{conjugate} if there is $h\in\mathcal{H}(X)$ such that 
$f\circ h=h\circ g$ and the \emph{conjugacy class} of $f$ is the set of all the homeomorphisms conjugate to $f$. 
This result gives rise to a natural question: 
besides the Cantor set, 
\begin{center}
\emph{which compact metric spaces have a $G_\delta$ dense conjugacy class? } 
\end{center}
On a space with a $G_\delta$ dense conjugacy class, the study of generic properties (invariant under conjugacy, as the shadowing property) is 
reduced to determine whether a representative of this class has the property or not.

In Theorem \ref{teoGenShEnY} 
we show that there are one-dimensional plane continua with a $G_\delta$ dense conjugacy class 
whose members have the shadowing property.
The proof of this result is based on Theorem \ref{teoElGenerico} where we show that 
for a compact interval $I$ there is a $G_\delta$ conjugacy class in $\mathcal{H}(I)$ 
which is dense in the subset of orientation preserving homeomorphisms of $I$. 
In addition, the proof of Theorem \ref{teoGenShEnY} depends on Propositions \ref{propElGenericoUniDim} and \ref{propCondSufSh}
where we give sufficient conditions for the existence of a residual conjugacy class and for a homeomorphism to have 
the shadowing property, respectively. 
The following open question has an affirmative answer in the examples known by the authors:
\begin{center}
 \emph{if a homeomorphism has a $G_\delta$ dense conjugacy class, does it have the shadowing property?}
\end{center}


\section{Generic dynamics on a closed segment}
Let $I=[0,1]$ and define $\mathcal{H}^+(I)=\{f\in \mathcal{H}(I):f\text{ preserves orientation}\}$.
In this section we will show the next result. 
\begin{teo}
\label{teoElGenerico}
There is $f_*\in\mathcal{H}^+(I)$ whose conjugacy class is 
a $G_\delta$ dense subset of $\mathcal{H}^+(I)$.
\end{teo}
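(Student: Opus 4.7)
The plan is to parameterize conjugacy of orientation-preserving homeomorphisms by their \emph{signed fixed-point structure} $(K_f,\sigma_f)$, where $K_f=\mathrm{Fix}(f)$ and $\sigma_f$ labels each connected component of $I\setminus K_f$ by the sign of $f-\mathrm{id}$ on it, and then to exhibit $f_*$ as a Fra\"iss\'e-generic model. Since any two orientation-preserving homeomorphisms of an interval with no interior fixed points and the same sign are conjugate, gluing these gap-wise conjugacies shows that $f,g\in\mathcal{H}^+(I)$ are conjugate in $\mathcal{H}(I)$ iff $(K_f,\sigma_f)$ and $(K_g,\sigma_g)$ are isomorphic as signed subsets of $I$ via an order-preserving or order-reversing homeomorphism of $I$.

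I would take $f_*$ with $K_{f_*}$ a Cantor set and $\sigma_{f_*}$ \emph{universal}, meaning that for every pair $a<b$ of points of $K_{f_*}$ the open interval $(a,b)$ contains both a $+$-gap and a $-$-gap of $K_{f_*}$. A concrete such $f_*$ is obtained by labeling the gaps of, say, the middle-thirds Cantor set via a diagonal procedure that forces both signs to appear in every subinterval. A back-and-forth argument on the countable gap sets---a signed refinement of Cantor's theorem for countable dense linear orders without endpoints---then shows that any two $f,g\in\mathcal{H}^+(I)$ with Cantor fixed set and universal signing are mutually conjugate, so the conjugacy class of $f_*$ equals the set $U$ of $f\in\mathcal{H}^+(I)$ having this property.

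For $G_\delta$-ness of $U$, universality rewrites as the countable intersection, over rational pairs $a<b$ in $I$ and integers $n\ge1$, of statements of the form ``either $[a,b]$ contains at most one fixed point of $f$, or $(a,b)$ contains an open subinterval of length $\ge 1/n$ on which $f>\mathrm{id}$ and another on which $f<\mathrm{id}$'', each a countable union of $C^0$-open conditions; combined with the standard $G_\delta$ characterizations that $K_f$ is perfect and nowhere dense, $U$ is $G_\delta$. For density, given $g\in\mathcal{H}^+(I)$ and $\epsilon>0$, approximate $g$ by a piecewise linear $\tilde g$ with finite fixed-point set $0=a_0<\cdots<a_n=1$ and sign pattern $(s_1,\dots,s_n)$ satisfying $\dista_{C^0}(g,\tilde g)<\epsilon/2$; in a $\delta$-neighborhood of each $a_i$ (with $\delta\ll\epsilon$) plant a scaled copy of a universal signed Cantor structure, keeping $f$ within $\epsilon/4$ of the identity there, and on each central subinterval $(a_{i-1}+\delta,a_i-\delta)$ define $f$ as a homeomorphism of that subinterval with no interior fixed points, sign $s_i$, and approximating $\tilde g$ up to a boundary correction of order $\delta$. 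The resulting $f$ lies in $U$ and satisfies $\dista_{C^0}(f,g)<\epsilon$.

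The main subtlety is to verify in the density step that the combined signed gap structure of the constructed $f$ is genuinely Fra\"iss\'e-generic, despite the $n$ inserted prescribed-sign dominant gaps. One must check that the gap order remains a countable dense linear order without endpoints (so abstractly isomorphic to $\mathbb{Q}$) and that both signs remain dense in every sub-order: the tiny universal Cantor clusters planted near each $a_i$ guarantee that both signs appear densely around every macroscopic interval, forcing universality globally. A secondary technical point is the explicit $G_\delta$ description of universality over a countable basis of rational intervals.
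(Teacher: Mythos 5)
Your reduction of conjugacy to the signed structure of $\fix(f)$, the back-and-forth construction of the conjugating homeomorphism, and the density construction are all essentially the paper's route (its Proposition on the property $P_\epsilon$ and the largest-interval matching), so those parts are fine in spirit, modulo the slip that your ``universality'' as literally stated (both signs of gap inside $(a,b)$ for \emph{every} pair $a<b$ of fixed points) is violated when $(a,b)$ is itself a gap. The genuine gap is the $G_\delta$ step. The conditions you declare to be ``countable unions of $C^0$-open conditions'' are not open: ``$[a,b]$ contains at most one fixed point of $f$'' is not open, because at a fixed point where $f-\mathrm{id}$ has a one-signed (tangential) zero, an arbitrarily small $C^0$-perturbation through increasing homeomorphisms creates two or more fixed points; likewise ``$(a,b)$ contains a subinterval of length $\ge 1/n$ on which $f>\mathrm{id}$'' is not open, since a maximal such interval of length exactly $1/n$ is destroyed by an arbitrarily small perturbation planting a fixed point in its interior. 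Moreover, as written your family of conditions does not even contain $f_*$: the gaps of the ternary Cantor set have rational endpoints, so for $[a,b]=[1/3,2/3]$ the closed interval contains two fixed points while $(a,b)$ carries only one sign; and intersecting over \emph{all} $n\ge 1$ makes the length clause degenerate at $n=1$. These are not cosmetic defects: the entire difficulty of the theorem is that the existence, sign and size of wandering intervals, and the absence of isolated fixed points, are not open conditions on $f$.

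The fallback you invoke, a ``standard $G_\delta$ characterization'' that $\fix(f)$ is perfect, is neither standard nor obviously true, and it cannot be waved away: perfectness is indispensable (glue scaled universal models to the two sides of an isolated fixed point $p$ flanked by an $r$-interval and an $l$-interval; the result has nowhere dense fixed set and both signs in every rational interval meeting two fixed points, yet lies outside the class), and over a rational basis ``no isolated fixed point'' is, for each basic interval, the union of a $G_\delta$ set (no fixed point there) and an $F_\sigma$ set (at least two), whose countable intersection is a priori $\Pi^0_3$, not $G_\delta$. The paper's solution to exactly this problem is the content of its openness argument: the class is written as $\bigcap_n\mathcal{U}_n$, where $\mathcal{U}_n$ demands finitely many \emph{strictly separated}, alternating $r$- and $l$-intervals with complementary gaps $<1/n$ (the strict inequalities $b_i<a_{i+1}$ at every scale are what exclude isolated fixed points in the limit), and openness of $\mathcal{U}_n$ is proved not by pinning down fixed sets but by certifying wandering intervals with a finite iterate: choosing $x_i$ near $a_i$ and $m$ with $f^m(x_i)$ near $b_i$, every $g$ with $\dista_{C^0}(f^m,g^m)$ small and $g^m(x_i)>x_i$ must have an $r$-interval containing $(x_i,g^m(x_i))$, and the $1/n$-density of the certified intervals follows from the triangle estimate. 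Your proposal contains no mechanism of this kind, so the $G_\delta$ half of the theorem --- which you label a ``secondary technical point'' --- is not established.
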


\begin{rmk}
The generic dynamics of circle homeomorphisms is studied in detail in \cite[Theorem 9.1]{AHK}.
The proof of Theorem \ref{teoElGenerico} follows the same ideas. 
As we could not find this result in the literature we include the details.
\end{rmk}

To prove Theorem \ref{teoElGenerico} we start by defining the homeomorphism $f_*$. 
For this purpose we introduce some definitions.
For $f\in\mathcal{H}^+(I)$ let $\fix(f)=\{x\in X:f(x)=x\}$. 
A connected component of $I\setminus \fix(f)$ will be called \emph{wandering interval}. 
Following \cite{Pi}, we say that a wandering interval $(a,b)$ is an $r$-\emph{interval}  if $\lim_{n\to+\infty}f^n(x)=b$ for all $x\in (a,b)$. 
Analogously, it is an $l$-\emph{interval} if $\lim_{n\to+\infty}f^n(x)=a$ for all $x\in (a,b)$. 
For each interval $[a,b]$ fix a homeomorphism
$f_r^{[a,b]}:[a,b]\to[a,b]$ such that $(a,b)$ is an $r$-interval. Analogously, we consider 
$f_l^{[a,b]}$ with $(a,b)$ an $l$-interval.

For $n\geq 0$ and $0\leq k< 3^{n}$ define the closed interval
\[
J(n,k)=
\left[\tfrac{3k+1}{3^{n+1}},\tfrac{3k+2}{3^{n+1}}\right]. 
\]
For $x$ in the ternary Cantor set define $f_*(x)=x$. 
In other case, there is a minimum integer $n_x\geq 0$ such that $x\in J(n_x,k)$ for some 
$0\leq k< 3^{n_x}$ and define 
\[
 f_*(x)=\left\{
 \begin{array}{ll}
  f_l^{J(n_x,k)}(x) & \text{ if }n_x\text{ is odd},\\
  f_r^{J(n_x,k)}(x) & \text{ if }n_x\text{ is even}.
 \end{array}
 \right.
\]
For example, $(\tfrac{1}{3},\tfrac{2}{3})$ is an $r$-interval, while 
$(\tfrac{1}{3^2},\tfrac{2}{3^2})$ and $(\tfrac{7}{3^2},\tfrac{8}{3^2})$ are $l$-intervals.
See Figure \ref{figHomeoG}.
\begin{figure}[h]
\centering
 \includegraphics[scale=1.2]{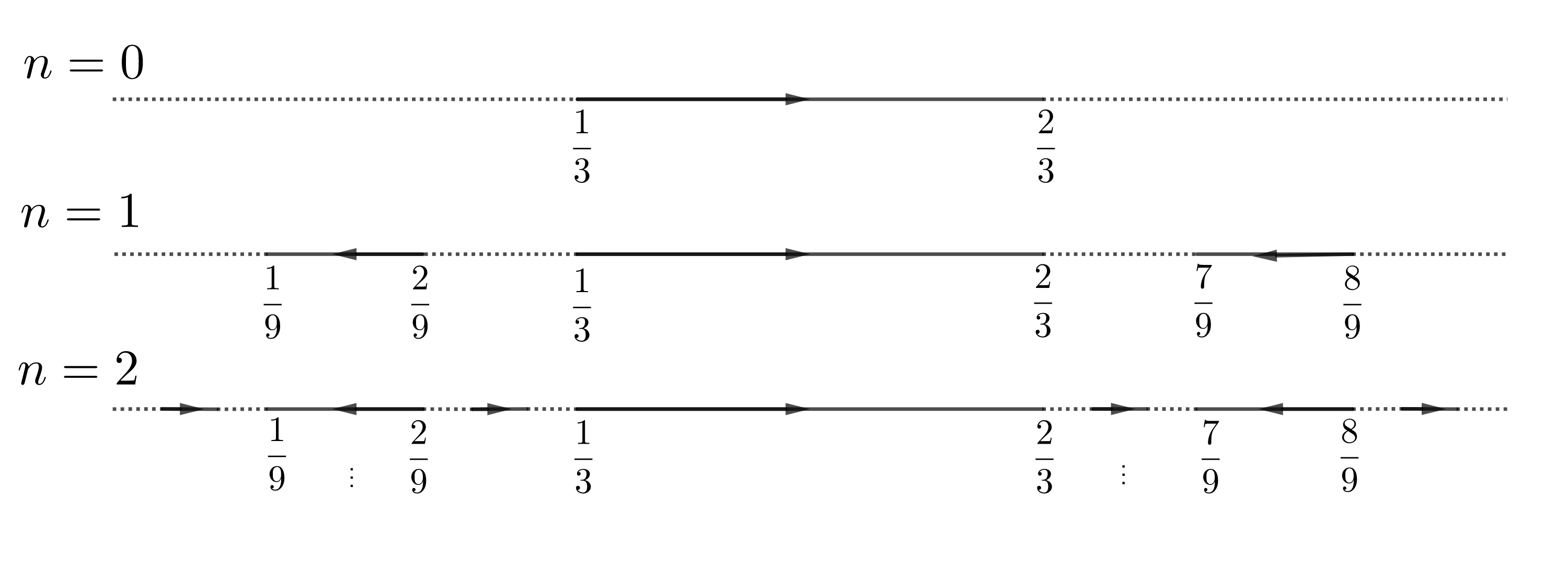}
 \caption{A sketch of the phase diagram of $f_*$.}
 \label{figHomeoG}
\end{figure}

\begin{rmk}
\label{rmkShFest}
From \cite[Theorem 8]{PeVa} we know that $f_*$, and every homeomorphism conjugate to $f_*$, has the shadowing property. 
\end{rmk}

The next result gives a useful characterization of the conjugacy class of $f_*$.
Given $\epsilon>0$ we say that $g\in\mathcal{H}^+(I)$ satisfies the property $P_\epsilon$ if  there are intervals $J_i=(a_i,b_i)$, $i=1,\dots,n$, such that:
 \begin{enumerate}
  \item $0<a_1<b_1<a_2<b_2<a_3<\dots<b_n<1$,
  \item $J_i$ is an $r$-interval for $i$ odd and an $l$-interval for $i$ even,
  \item $\max\{a_1,1-b_n\}<\epsilon$ and $\max\{a_{i+1}-b_i:1\leq i<n\}<\epsilon$.
 \end{enumerate} 

\begin{prop}
\label{propCharClassf*}
A homeomorphism $g\in\mathcal{H}^+(I)$ is conjugate to $f_*$ if and only if 
it satisfies $P_\epsilon$ for all $\epsilon>0$.
\end{prop}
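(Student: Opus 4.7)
The plan is to exploit the ternary self-similar structure of $f_*$ to relate the combinatorial condition $P_\epsilon$ to the existence of a conjugacy.

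For $(\Rightarrow)$, the first step is to verify directly that $f_*$ itself satisfies $P_\epsilon$ for every $\epsilon>0$. Given such $\epsilon$, I would pick $N$ with $3^{-(N+1)}<\epsilon$ and extract, among the middle-third intervals $J(n,k)$, a subcollection whose $r/l$-types alternate and whose consecutive gaps are at most $\epsilon$; starting from the level-$N$ intervals as a base, whenever two consecutive chosen intervals would have the same type, the ternary self-similarity lets me insert between them a finer $J(n',k')$ of the opposite type without breaking the gap bound. Once $f_*$ is known to satisfy $P_\epsilon$ for every $\epsilon$, the property transfers to any conjugate $g$: if $f_*\circ h=h\circ g$ with $h\in\mathcal{H}^+(I)$, then both $h$ and $h^{-1}$ are uniformly continuous, so the image under $h^{-1}$ of a $P_\delta$-collection of $f_*$ becomes a $P_\epsilon$-collection of $g$ as soon as $\delta$ is smaller than the uniform-continuity modulus of $h^{-1}$ at scale $\epsilon$. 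The $r/l$-type is preserved under this transfer because $f_*^n(x)\to b$ combined with $h^{-1}$ being orientation-preserving gives $g^n(h^{-1}(x))=h^{-1}(f_*^n(x))\to h^{-1}(b)$.

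For $(\Leftarrow)$, assume $g\in\mathcal{H}^+(I)$ satisfies $P_\epsilon$ for every $\epsilon>0$. The plan is to construct the conjugacy $h$ by matching the wandering intervals of $f_*$ with wandering intervals of $g$ of the same type and in the same linear order, defining $h$ on each matched pair to be the unique orientation-preserving affine map, and then extending $h$ continuously to $\fix(f_*)$. The matching is produced inductively. At step $k$, I extract from $P_{1/k}$ a finite alternating collection of wandering intervals of $g$, and pair each with a middle-third interval $J(n',k')$ of $f_*$ of the same type at the corresponding position in the linear order; the ternary self-similarity of $f_*$ guarantees that, between any two previously chosen $f_*$-intervals, further $J(n',k')$ of any prescribed alternating pattern exist, so the induction can always proceed. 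In the limit one obtains an order-preserving and type-preserving bijection between the wandering intervals of $f_*$ and those of $g$, and the matched $h$ extends continuously to a homeomorphism of $I$ because gap sizes on both sides tend to $0$. The intertwining $f_*\circ h=h\circ g$ holds trivially on $\fix(f_*)$ and on each matched $J(n,k)$ follows from both $f_*|_{J(n,k)}$ and $g$ on the matched interval being affinely conjugate to the same model $r$- or $l$-homeomorphism.

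The main obstacle I expect is organizing the inductive matching so that, in the limit, every wandering interval on both sides is matched. At each step the new pairings must interleave correctly with the old pairings in the order on $I$, must preserve the alternating type pattern of the combined collection, and must shrink the gap bounds toward zero; this bookkeeping is the heart of the proof, and it relies on the combined flexibility provided by the rich self-similar combinatorics of $f_*$ and by the hypothesis $P_\epsilon$ for all $\epsilon>0$.
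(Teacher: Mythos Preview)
Your proposal has the right overall shape---verify $P_\epsilon$ for $f_*$ and transfer it to conjugates, then build a conjugacy by an inductive matching of wandering intervals---but there are two concrete gaps.

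First, the affine claim is wrong. You propose defining $h$ on each matched pair of wandering intervals as the orientation-preserving affine map, and then assert that $f_*\circ h=h\circ g$ on that pair because both restrictions are ``affinely conjugate to the same model.'' They are not: two homeomorphisms of $[a,b]$ fixing only the endpoints and moving interior points in the same direction are always \emph{topologically} conjugate, but the conjugacy is essentially never affine. The fix is simply to take $h$ on each matched pair to be such a topological conjugacy (which exists and is unique up to choices irrelevant here), exactly as the paper does.

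Second, and more seriously, your matching scheme does not ensure that every wandering interval is matched. You extract at step $k$ a $P_{1/k}$-collection for $g$, but nothing forces these collections to be nested, and nothing forces a given wandering interval of $g$ ever to appear in one of them. You correctly flag this as ``the main obstacle,'' but you do not resolve it, and the proposal as written does not yield a bijection between the wandering intervals of $f_*$ and those of $g$. The paper handles this differently: it first uses the $P_\epsilon$ hypothesis once and for all to deduce that $\fix(g)$ is a Cantor set and that between any two wandering intervals of $g$ there exist both an $r$-interval and an $l$-interval. With that structural information in hand, it discards the $P_\epsilon$ collections entirely and matches intervals by a direct back-and-forth: on the $f_*$ side one enumerates the $J(n,k)$ systematically, and on the $g$ side one always selects, within the current gap, the interval of the required type of \emph{largest diameter}. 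This largest-diameter rule is the missing idea---it guarantees every wandering interval of $g$ is eventually chosen, since only finitely many can have diameter above any fixed threshold.
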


\begin{proof}
The direct part of the proof is clear from the construction of $f_*$. 

To prove the converse suppose that $g$ satisfies $P_\epsilon$ for all $\epsilon>0$.
 From the condition (3) we see that $\fix(g)$ is totally disconnected. 
 Suppose that $p\in I$ is an isolated fixed point. 
 If $p=0$ then there is a wandering interval $(0,x)$. Taking $\epsilon\in(0,x)$ we have a contradiction with (3), because $a_1<\epsilon$.  
 Analogously we show that $p$ cannot be 1. 
 If $p\in (0,1)$ then $p$ is in the boundary of two wandering intervals. Taking $\epsilon$ smaller than the length of these intervals 
 we contradict (1) and (3).
 Thus, $\fix(g)$ has no isolated point and is a Cantor set. 
 Condition (2) (applied for a suitable $\epsilon$ small) implies that between two wandering intervals there is 
 an $r$-interval and an $l$-interval. 

Let $\mathcal{R}$ and $\mathcal{L}$ be the families of $r$-intervals  and $l$-intervals of $g$ respectively.
We define an order in $\mathcal{R}\cup\mathcal{L}$ in the following way: 
$I_\alpha < I_\beta$ if $x< y$ for all $x\in I_\alpha$, $y\in I_\beta$.
We will make the conjugacy by induction.
For the first step  name $I_{1/2}\in\mathcal{R}$ which satisfies $\diam(I_{1/2})\geq \diam(I)$ 
for every $I\in \mathcal{R}$.
In case that there exists more than one interval which verifies this condition we choose any of them.
Let $J_{c}$ be a wandering interval of $f_*$ 
such that $c$ is the midpoint of $J_c$. 
By construction, $J_{1/2}$ is an $r$-interval of $f_*$, thus we can 
consider a conjugacy $h_{1/2}\colon I_{1/2}\to J_{1/2}$ of $g$ and $f_*$ restricted to these intervals.
Notice that $1/6$ and $5/6$ are the midpoints of $(1/9,2/9)$ and $(7/9,8/9)$ respectively.
Take $I_{1/6}\in\mathcal{L}$ satisfying $I_{1/6}<I_{1/2}$ and $\diam(I_{1/6})\geq \diam(I)$ 
for every $I\in\mathcal{L}$ such that $I<I_{1/2}$. 
Also, take $I_{5/6}\in\mathcal{L}$ satisfying $I_{1/2}<I_{5/6}$ 
and $\diam(I_{5/6})\geq \diam(I)$ for every $I\in\mathcal{L}$ such that $I>I_{1/2}$.
Then consider $h_{1/6}\colon I_{1/6}\to J_{1/6}$ to be a conjugacy from $g$ to $f_*$ restricted to the corresponding intervals.
Similarly, define $h_{5/6}$.
Then we go on defining $2^{k-1}$ homeomorphisms on each step. 
If $k-1$ is even, we choose $r$-intervals, otherwise we choose $l$-intervals.
Notice that  since  in each step we choose the largest interval of the $r$ or $l$-intervals of $g$, 
every wandering interval of $g$ is eventually chosen. 
In this way, the conjugacies $h_{j/k}$ give rise to a conjugacy $h$ of $g$ and $f_*$ in the whole interval $[0,1]$ and the proof ends.
\end{proof}

\begin{proof}[Proof of Theorem \ref{teoElGenerico}]
Given $n\geq 1$, let $\mathcal{U}_n$ be the set of increasing homeomorphisms of $I$ satisfying $P_{1/n}$.
Notice that $P_\epsilon$ implies $P_{\epsilon'}$ for all $\epsilon'>\epsilon>0$.
Thus, from Proposition \ref{propCharClassf*} we have that 
the conjugacy class of $f_*$ is the countable intersection
$\bigcap_{n\geq 1} \mathcal{U}_n$.
To finish the proof, applying Baire's Theorem, 
we will show that each $\mathcal{U}_n$ is open and dense in $\mathcal{H}^+(I)$.
 
 To prove that $\mathcal{U}_n$ is open consider $f\in\mathcal{U}_n$. It is clear that there is $\delta>0$ such that 
 $f\in\mathcal{U}_{n-4\delta}$. 
 Consider the intervals $(a_i,b_i)$ from the definition of property $P_\epsilon$, for $\epsilon=1/n$. 
For each odd $i=1,\dots,n$, 
take $x_i\in (a_i,a_i+\delta)$ and
for $i$ even take $y_i\in (b_i-\delta,b_i)$. 
Consider $m\in\N$ large such that $f^m(x_i)\in(b_i-\delta,b_i)$ and 
$f^m(y_i)\in (a_i,a_i+\delta)$ for all $i$.
Take a neighborhood $\V$ of $f$ such that
$\dista_{C^0}(f^m,g^m)<\delta$ for all $g\in\V$ and
$g^m(x_i)>x_i$, $g^m(y_i)<y_i$ for all $i$.
This implies that $(x_i,g^m(x_i))$ is contained in an $r$-interval for $g$ and 
$(g^m(y_i),y_i)$ is contained in an $l$-interval for $g$.
For all $g\in \V$ and $i$ odd we have 
\[
 \begin{array}{lll}
|g^m(x_i)-g^m(y_{i+1}))|&\leq &|g^m(x_i)-f^m(x_i)|+|f^m(x_i)-f^m(y_{i+1})|\\
			&&+|f^m(y_{i+1})-g^m(y_{i+1})|\\
			&<&\delta+ |f^m(x_i)-b_i|+|b_i-a_{i+1}|+|a_{i+1}-f^m(y_{i+1})|+ \delta  \\
			&<&2\delta+(1/n-4\delta)+2\delta =1/n. 
 \end{array}
\]
Arguing analogously for $i$ even, we conclude that $g\in\mathcal{U}_n$ and $\mathcal{U}_n$ is open. 

To prove that $\mathcal{U}_n$ is dense in $\mathcal{H}^+(I)$ the following remark is sufficient. 
Given $f\in \mathcal{H}^+(I)$, $p\in\fix(f)\cap (0,1)$ and $\delta>0$ small, we can define $g\in \mathcal{H}^+(I)$ close to $f$ such that: 
\begin{itemize}
 \item $f|_{[0,p]}$ and $g|_{[0,p-\delta]}$ are conjugate,
 \item $f|_{[p,1]}$ and $g|_{[p+\delta,1]}$ are conjugate and 
 \item $g$ has an $r$ or $l$-interval at $[p-\delta,p+\delta]$. 
\end{itemize}
That is, a fixed point can be \emph{exploded} into a small wandering interval with an arbitrarily small perturbation. 
Performing finitely many of such explosions the density of $\mathcal{U}_n$ is obtained.
\end{proof}


\section{Genericity on a plane one-dimensional continuum}
\label{secElGenericoUniDim}
In this section we show that there are some particular 
one-dimensional plane continua with a $G_\delta$ dense conjugacy class whose members have the shadowing property. 
We start with a sufficient condition for the existence of a $G_\delta$ dense conjugacy class.
An open subset $U\subset X$ is a \emph{free arc} if it is homeomorphic to $\R$. 

\begin{prop}
\label{propElGenericoUniDim}
If $X$ is a compact metric space such that 
\begin{enumerate}
 \item $X=\cup_{n\geq 1}a_n$, where each $a_n$ is a compact arc with extreme points $p_n,q_n\in X$ for all $n\geq 1$,
 \item $a_n\setminus \{p_n,q_n\}$ is a free arc for all $n\geq 1$ and
 \item for all $f\in\mathcal{H}(X)$ it holds that $f(a_n)=a_n$ and $p_n,q_n\in\fix(f)$ for all $n\geq 1$
\end{enumerate}
then $\mathcal{H}(X)$ has a $G_\delta$ dense conjugacy class.
\end{prop}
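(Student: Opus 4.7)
The plan is to mimic, arc-by-arc, the construction and genericity argument of Theorem \ref{teoElGenerico}. First, for each $n$ fix a parameterization $\phi_n : a_n \to I$ with $\phi_n(p_n) = 0$ and $\phi_n(q_n) = 1$, and define a homeomorphism $F_* \in \mathcal{H}(X)$ by $F_*|_{a_n} = \phi_n^{-1} \circ f_* \circ \phi_n$, where $f_*$ denotes the interval map of Theorem \ref{teoElGenerico}. Condition (2) makes each $a_n \setminus \{p_n, q_n\}$ an open free arc, so distinct arcs can share only their endpoints; condition (3) then forces the arc-wise definitions to agree at such shared endpoints (which are fixed points), so $F_*$ is well defined.

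Next I would follow the Baire-category outline of Theorem \ref{teoElGenerico}. For each $n,k \geq 1$ set
$$\mathcal{U}_{n,k} = \left\{ f \in \mathcal{H}(X) : \phi_n \circ f|_{a_n} \circ \phi_n^{-1} \text{ satisfies } P_{1/k}\right\}.$$
Condition (3) guarantees that $f|_{a_n}$ is orientation-preserving, so by Proposition \ref{propCharClassf*} the conjugacy class of $F_*$ in $\mathcal{H}(X)$ is contained in $\mathcal{R} := \bigcap_{n,k\geq 1} \mathcal{U}_{n,k}$. The openness argument of Theorem \ref{teoElGenerico} transfers directly, because $f \mapsto f|_{a_n}$ is $C^0$-continuous. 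For density, given $f \in \mathcal{H}(X)$ the explosion perturbations used to establish density in $\mathcal{H}^+(I)$ can be carried out inside $a_n \setminus \{p_n, q_n\}$ and extended by the identity on $X \setminus a_n$; this yields a homeomorphism of $X$ arbitrarily $C^0$-close to $f$ whose restriction to $a_n$ satisfies $P_{1/k}$. Baire's theorem then shows that $\mathcal{R}$ is a dense $G_\delta$ subset of $\mathcal{H}(X)$.

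It remains to identify $\mathcal{R}$ with the conjugacy class of $F_*$. Given $f \in \mathcal{R}$, Proposition \ref{propCharClassf*} furnishes, on each $a_n$, a conjugacy $h_n : a_n \to a_n$ between $F_*|_{a_n}$ and $f|_{a_n}$ with $h_n(p_n) = p_n$ and $h_n(q_n) = q_n$. The main obstacle is to glue the $h_n$'s into a global homeomorphism $h : X \to X$: I expect this to reduce to checking that distinct arcs meet only at their endpoints (where every $h_n$ is forced to coincide with the identity on the overlap) and to verifying continuity at points of $X$ where infinitely many arcs accumulate. Both facts should follow from (1)-(3) combined with the openness of each $a_n \setminus \{p_n,q_n\}$ in $X$.
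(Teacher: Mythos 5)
Your proposal is correct and follows essentially the same route as the paper: reduce to the interval result arc-by-arc, obtain a dense $G_\delta$ set by Baire, and identify it with the class of the map that is a conjugate of $f_*$ on each $a_n$. The only difference is bookkeeping: the paper uses the homeomorphism $\varphi_n(f)=f|_{a_n}\times f|_{X_n}$ onto $\mathcal{H}^+(a_n)\times\mathcal{H}_n$ to pull back the dense $G_\delta$ class $\mathcal{R}_n\times\mathcal{H}_n$ (so Theorem \ref{teoElGenerico} is reused as a black box, rather than re-running the $P_{1/k}$ openness/density argument inside $\mathcal{H}(X)$ as you do with $\mathcal{U}_{n,k}$), your set $\bigcap_{n,k}\mathcal{U}_{n,k}$ coincides with the paper's $\bigcap_n\varphi_n^{-1}(\mathcal{S}_n)$, and the gluing of the arc-wise conjugacies that you flag as the remaining obstacle is likewise left implicit in the paper (it is exactly what is asserted in calling the intersection a conjugacy class, cf.\ Remark \ref{rmkRepGen}).
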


\begin{proof}
%
 For each $n\geq 1$ let $X_n=\clos(X\setminus a_n)$ and define 
 \[
  \mathcal{H}_n=\{f\in\mathcal{H}(X_n): p_n,q_n\in\fix(f)\}
 \]
and the map $\varphi_n\colon \mathcal{H}(X)\to \mathcal{H}^+(a_n)\times \mathcal{H}_n$ as 
$\varphi_n(f)=f|_{a_n}\times f|_{X_n}$.
In $\mathcal{H}^+(a_n)\times \mathcal{H}_n$ we consider the product topology.
It is clear that $\varphi_n$ is a homeomorphism for each $n\geq 1$.
Let $\mathcal{R}_n$ be the $G_\delta$ dense conjugacy class of $\mathcal{H}^+(a_n)$ 
given by Theorem \ref{teoElGenerico}
and define 
$\mathcal{S}_n=\mathcal{R}_n\times \mathcal{H}_n$. 
Thus, $\cap_{n\geq 1} \varphi_n^{-1}(\mathcal{S}_n)$ is a $G_\delta$ dense conjugacy class in $\mathcal{H}(X)$. 
\end{proof}

\begin{rmk}\label{rmkRepGen}
Notice that a representative $g_*$ of the $G_\delta$ dense conjugacy of Proposition \ref{propElGenericoUniDim} 
is obtained by considering a conjugate of 
$f_*$ on each arc $a_n$ of $X$. 
\end{rmk}

Now we will prove a sufficient condition for a homeomorphism to have the shadowing property.
For this purpose we need some definitions and a lemma.
Suppose that $(X,\dista)$ is a compact metric space and take $f\in\mathcal{H}(X)$. 
A compact $f$-invariant subset $A\subset X$ is a \emph{quasi-attractor} if for every 
open neighborhood $U$ of $A$ 
there is an open subset $V\subset U$ such that $A\subset V$ and 
$\clos(f(V))\subset V$. 
If in addition $f\colon A\to A$ has the shadowing property we say that $A$ is a \emph{quasi-attractor with shadowing}.

\begin{lem}
\label{lemShQA}
 If $A\subset X$ is a quasi-attractor with shadowing then for all $\epsilon>0$ 
 there is $\delta>0$ such that if $\{x_n\}_{n\geq 0}$ is a $\delta$-pseudo-orbit
 with $x_0\in B_\delta(A)$ then there is $y\in A$ that $\epsilon$-shadows $\{x_n\}_{n\geq 0}$. 
\end{lem}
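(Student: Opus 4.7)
The plan is to reduce the shadowing problem for pseudo-orbits starting near $A$ to the shadowing problem inside $A$, which we know holds by hypothesis. The three ingredients needed are: (i) the quasi-attractor property, to trap the pseudo-orbit in a small forward-invariant neighborhood of $A$; (ii) uniform continuity of $f$, to transfer $\delta$-pseudo-orbits in $X$ to pseudo-orbits in $A$ via a nearest-point-type approximation; (iii) shadowing of $f|_A$, to produce the shadowing point $y \in A$.

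Fix $\epsilon>0$ and choose $\epsilon'=\epsilon/2$. First I would invoke the shadowing hypothesis of $f|_A$ to extract a modulus $\delta_1>0$ such that every $\delta_1$-pseudo-orbit in $A$ is $\epsilon'$-shadowed by some orbit in $A$. Next, using uniform continuity of $f$ on $X$, I would pick $\eta>0$ so that $\dista(p,q)<\eta$ implies $\dista(f(p),f(q))<\delta_1/3$, and set $\eta'=\min(\eta,\delta_1/3,\epsilon')$. Then I would apply the quasi-attractor property to the open neighborhood $B_{\eta'}(A)$ to obtain an open set $V$ with $A\subset V\subset B_{\eta'}(A)$ and $\clos(f(V))\subset V$. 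Since $\clos(f(V))$ is compact and disjoint from the closed set $X\setminus V$, the number $r=\dista(\clos(f(V)),X\setminus V)$ is strictly positive, as is $\rho=\dista(A,X\setminus V)$.

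Now I would set $\delta=\min(r,\rho,\delta_1/3)$. The crux of the argument is an induction: if $\{x_n\}_{n\geq 0}$ is a $\delta$-pseudo-orbit with $x_0\in B_\delta(A)$, then $x_n\in V$ for every $n\geq 0$. The base case holds since $x_0\in B_\rho(A)\subset V$, and if $x_n\in V$ then $f(x_n)\in \clos(f(V))$, so $\dista(x_{n+1},\clos(f(V)))<\delta\leq r$ places $x_{n+1}$ in $V$. Consequently, each $x_n$ lies within $\eta'$ of $A$, and I may pick $a_n\in A$ with $\dista(x_n,a_n)<\eta'$. A routine three-term triangle inequality using the choices of $\eta$ and $\delta$ then shows that $\{a_n\}_{n\geq 0}$ is a $\delta_1$-pseudo-orbit in $A$.

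Finally, to apply the shadowing of $f|_A$ (which is stated for bi-infinite pseudo-orbits), I would extend the one-sided sequence $\{a_n\}_{n\geq 0}$ to $\{a_n\}_{n\in\Z}$ by setting $a_n=f^n(a_0)$ for $n<0$; the extension is automatically a $\delta_1$-pseudo-orbit, and it stays in $A$ by $f$-invariance. Shadowing yields $y\in A$ with $\dista(f^n(y),a_n)<\epsilon'$ for all $n\in\Z$, hence $\dista(f^n(y),x_n)<\epsilon'+\eta'\leq\epsilon$ for $n\geq 0$, completing the proof. The one point requiring care, and the place the hypotheses interact most delicately, is the trapping step: one must balance $\delta$ against both the distance $r$ between $\clos(f(V))$ and $\partial V$ and the diameter bound $\eta'$ on $V$, which is precisely where the quasi-attractor hypothesis is essential.
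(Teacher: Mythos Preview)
Your proof is correct and follows essentially the same approach as the paper's: both trap the pseudo-orbit in a forward-invariant neighborhood $V\subset B_{\eta'}(A)$ furnished by the quasi-attractor hypothesis, project it to a $\delta_1$-pseudo-orbit in $A$ via nearest points and uniform continuity, and then apply shadowing of $f|_A$ together with a final triangle inequality. Your treatment is in fact slightly more careful than the paper's, since you explicitly extend the one-sided sequence $\{a_n\}_{n\geq 0}$ to a bi-infinite pseudo-orbit before invoking the (bi-infinite) shadowing hypothesis, a step the paper leaves implicit.
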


\begin{proof}
Given $\epsilon>0$ 
take $\delta_1>0$ such that every $\delta_1$-pseudo-orbit in $A$ is $\epsilon/2$-shadowed by a point in $A$.
Consider $0<\alpha<\min\{\epsilon/2,\delta_1/3\}$ 
such that $\dista(a,b)<\alpha$ implies $\dista(f(a),f(b))<\delta_1/3$. 
Since $A$ is a quasi-attractor, for $U=B_\alpha(A)$ there exists an open set $V$ such that
$A\subset V\subset U$ and $\clos(f(V))\subset V$. 
Take $\delta\in(0,\delta_1/3)$ such that $B_\delta(\clos(f(V)))\subset V$.

Suppose that $\{x_n\}_{n\geq 0}$ is a $\delta$-pseudo-orbit
 with $x_0\in B_\delta(A)$.
Since $f(x_0)\in f(V)$ we have that 
$x_1\in B_\delta(f(V))$ and $x_1\in V$. 
In this way we prove that
$x_n\in V$ for all $n\geq 0$. 
For each $n\geq 0$ take $y_n\in A$ such that $\dista(y_n,x_n)<\alpha$. 
We have that
\[
\begin{array}{ll}
 \dista(f(y_n),y_{n+1})&\leq 
 \dista(f(y_n),f(x_n))+\dista(f(x_n),x_{n+1})+\dista(x_{n+1},y_{n+1})\\
 &\leq\delta_1/3+\delta+\alpha< 3\delta_1/3=\delta_1.
\end{array}
\]
This proves that $\{y_{n}\}_{n\geq 0}$ is a $\delta_1$-pseudo-orbit contained in $A$. 
There exists $z\in A$ that $\epsilon/2$-shadows $\{y_n\}_{n\geq 0}$. 
Thus
\[
 \dista(f^n(z),x_n)\leq\dista(f^n(z),y_n)+\dista(y_n,x_n)<\epsilon/2+\alpha\leq\epsilon.
\]
And the proof ends.
\end{proof}

\begin{prop}
\label{propCondSufSh}
 If every point of $X$ belongs to a quasi-attractor with shadowing then $f$ has shadowing.
\end{prop}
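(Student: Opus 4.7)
The plan is to derive the global (bi-infinite) shadowing property of $f$ from the forward, semi-infinite shadowing supplied by Lemma~\ref{lemShQA}, combining a compactness argument on $X$ to get a uniform $\delta$ with a diagonal extraction on the resulting semi-shadowing points to produce a single orbit that shadows the whole pseudo-orbit.

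First I would fix $\epsilon>0$ and, for each $x\in X$, use the hypothesis to pick a quasi-attractor with shadowing $A_x\ni x$; Lemma~\ref{lemShQA} applied to $A_x$ with tolerance $\epsilon/2$ then yields a constant $\delta_x>0$. Since $x\in A_x\subset B_{\delta_x}(A_x)$, the sets $B_{\delta_x}(A_x)$ form an open cover of $X$, so compactness produces finitely many quasi-attractors $A_1,\dots,A_k$ with associated constants $\delta_1,\dots,\delta_k$ whose corresponding balls still cover $X$. Set $\delta:=\min_j\delta_j$.

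Next, given an arbitrary $\delta$-pseudo-orbit $\{x_i\}_{i\in\Z}$ and each integer $N\geq 0$, the point $x_{-N}$ lies in some $B_{\delta_{j(N)}}(A_{j(N)})$. Because $\delta\leq \delta_{j(N)}$, the forward tail $\{x_{-N+i}\}_{i\geq 0}$ is a $\delta_{j(N)}$-pseudo-orbit starting in $B_{\delta_{j(N)}}(A_{j(N)})$, so Lemma~\ref{lemShQA} furnishes $y_N\in A_{j(N)}$ with $\dista(f^i(y_N),x_{-N+i})<\epsilon/2$ for all $i\geq 0$. Writing $z_N:=f^N(y_N)$, which remains in the $f$-invariant set $A_{j(N)}$, this becomes $\dista(f^n(z_N),x_n)<\epsilon/2$ for every $n\geq -N$.

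Finally I would invoke compactness of $X$ to extract a subsequence $z_{N_k}\to z\in X$. For any fixed $n\in\Z$, once $N_k\geq -n$ one has $\dista(f^n(z_{N_k}),x_n)<\epsilon/2$; passing to the limit via continuity of $f^n$ yields $\dista(f^n(z),x_n)\leq\epsilon/2<\epsilon$, so $z$ is the desired $\epsilon$-shadowing point. The only real obstacle is that a quasi-attractor only controls forward orbits, so the genuinely new step is this promotion of the one-sided shadowing of Lemma~\ref{lemShQA} to bi-infinite shadowing via a compactness/diagonal argument; the rest is bookkeeping with the uniform constant $\delta$.
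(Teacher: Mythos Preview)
Your argument is correct and follows the paper's approach: apply Lemma~\ref{lemShQA} at each point, extract a finite subcover of the balls $B_{\delta_x}(A_x)$ by compactness, and take $\delta=\min_j\delta_j$. The paper's proof actually stops after shadowing one-sided pseudo-orbits $\{x_n\}_{n\geq 0}$ (tacitly using that, for a homeomorphism of a compact space, forward shadowing implies the bi-infinite version), whereas your diagonal extraction with the points $z_N$ spells out precisely that implication, so your write-up is more self-contained but otherwise identical.
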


\begin{proof}
Suppose that $\epsilon>0$ is given. 
For each $x\in X$ let $A_x\subset X$ be a quasi-attractor with shadowing containing $x$. 
Let $\delta_x>0$ be given by Lemma \ref{lemShQA} such that for every $\delta_x$-pseudo-orbit $\{x_n\}_{n\geq 0}$ with 
$x_0\in B_{\delta_x}(A_x)$ there is a point in $A_x$ that $\epsilon$-shadows $\{x_n\}_{n\geq 0}$.
As $X$ is compact there is a finite sequence $x_1,\dots,x_k\in X$ such that 
$\cup_{i=1}^k B_{\delta_i}(A_i)=X$ where $A_i=A_{x_i}$ and $\delta_i=\delta_{x_i}$.
If we take $\delta=\min\{\delta_1,\dots,\delta_k\}$ we have that for every $\delta$-pseudo-orbit $\{x_n\}_{n\geq 0}$ in $X$
there is $j$ such that $x_0\in B_{\delta_j}(A_j)$. Then, there is a point in $A_j$ that $\epsilon$-shadows $\{x_n\}_{n\geq 0}$ 
and the proof ends.
\end{proof}

Let $Y\subset\R^2$ be the union of: 
\begin{itemize}
 \item the circle arc $x^2+y^2=1$, $y\leq 0$,
 \item the horizontal segment $[-1,1]\times\{0\}$ and 
 \item the vertical segments $\{-1+\frac{2}{n}\}\times [0,1/n]$, for $n\geq 1$.
\end{itemize}
See Figure \ref{figConjuntoX}. 
\begin{figure}[ht]
\centering
 \includegraphics[scale=0.3]{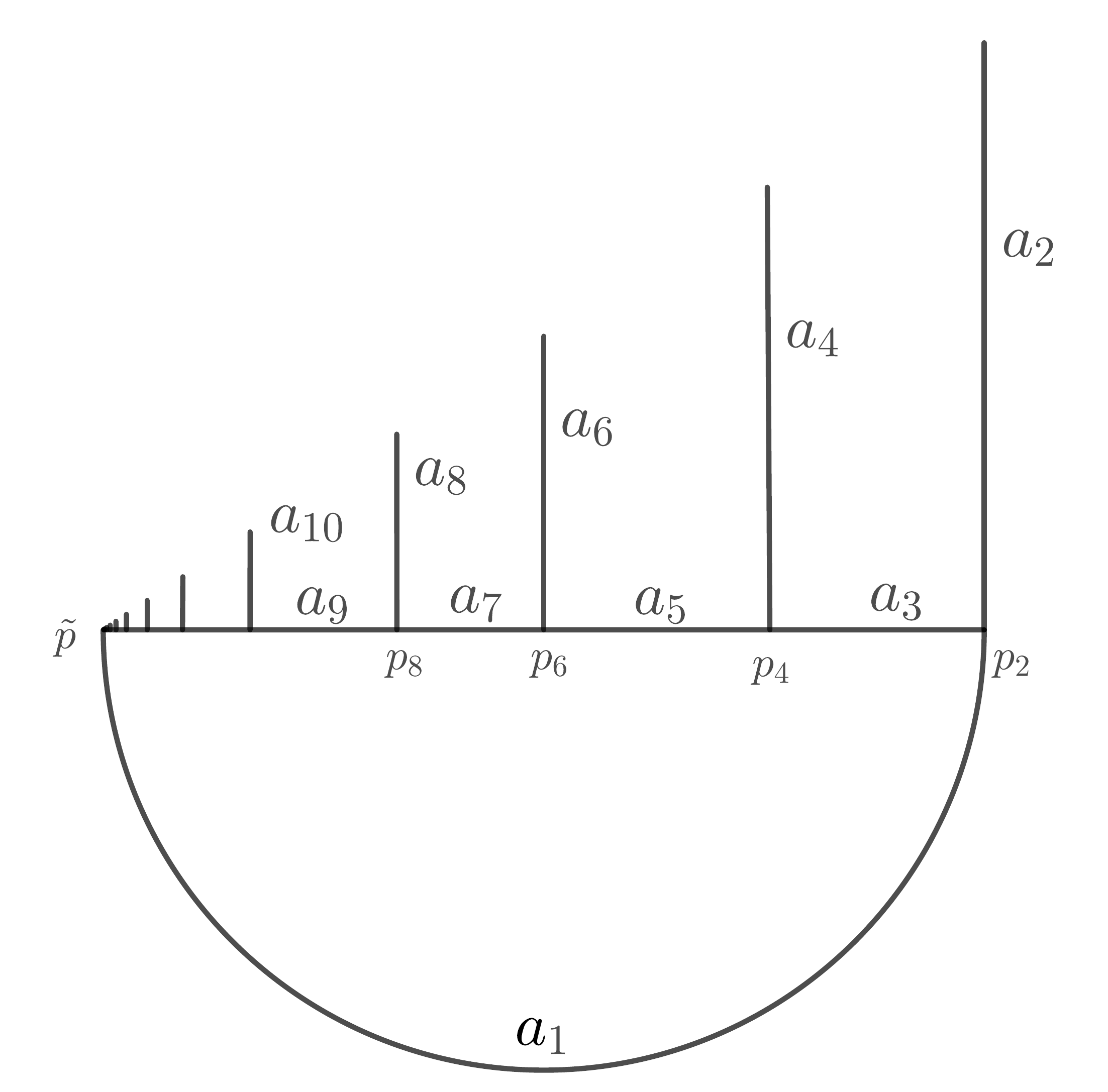}
 \caption{The continuum $Y$ can be decomposed as a union of arcs as in Proposition \ref{propElGenericoUniDim}.}
 \label{figConjuntoX}
\end{figure}

\begin{teo}
\label{teoGenShEnY}
For the continuum $Y$ 
there is a $G_\delta$ conjugacy class which is dense in $\mathcal{H}(Y)$ and whose members have the shadowing property. 
In particular, the shadowing property is generic in $\mathcal{H}(Y)$.
\end{teo}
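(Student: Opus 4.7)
The proof combines the two general tools of the paper: Proposition \ref{propElGenericoUniDim} produces a $G_\delta$ dense conjugacy class in $\mathcal{H}(Y)$, and Proposition \ref{propCondSufSh} applied to the representative $g_*$ of Remark \ref{rmkRepGen} shows that $g_*$ has shadowing.

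The plan is to decompose $Y$ as $a_0 \cup \bigcup_{n\geq 1} v_n \cup \bigcup_{n\geq 1} h_n$, where $a_0$ is the lower semicircle, $v_n = \{x_n\}\times[0,1/n]$ with $x_n = -1 + 2/n$, and $h_n$ is the horizontal segment from $(x_{n+1},0)$ to $(x_n,0)$. Conditions (1) and (2) of Proposition \ref{propElGenericoUniDim} are immediate from the geometry. For condition (3), I would chain the following topological observations, valid for any $f\in\mathcal{H}(Y)$: the set $E$ of endpoints of $Y$ (points having connected small punctured neighborhoods) equals $\{(x_n,1/n):n\geq 1\}$, so $f(E)=E$; since $(-1,0)$ is the unique accumulation point of $E$ in $Y$, we obtain $f(-1,0)=(-1,0)$; among the $3$-branch points of $Y$, only $(1,0)$ is joined to $(-1,0)$ by an arc whose interior contains no further branch point (namely the semicircle $a_0$), so $f(1,0)=(1,0)$; the semicircle $a_0$ is then the unique arc from $(-1,0)$ to $(1,0)$ without interior branch points, and the horizontal segment $H$ is the unique other arc between these points, so $f(a_0)=a_0$ and $f(H)=H$; since $f|_H$ is a self-homeomorphism of an arc fixing both endpoints it is orientation-preserving, and hence preserves the order of the interior branch points of $H$, a discrete countable set accumulating only at $(-1,0)$; the only order-preserving self-bijection of such a set is the identity, so every branch point is individually fixed; finally, the uniqueness in $Y$ of the arc between two given fixed endpoints yields $f(v_n)=v_n$ and $f(h_n)=h_n$ with endpoints fixed.

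Having obtained from Proposition \ref{propElGenericoUniDim} a $G_\delta$ dense conjugacy class represented by $g_*$ as in Remark \ref{rmkRepGen}, I would use Proposition \ref{propCondSufSh} to show $g_*$ has shadowing. It suffices to prove that every arc in the above decomposition is a quasi-attractor with shadowing for $g_*$, since every point of $Y$ lies in some such arc. Shadowing of $g_*$ restricted to any arc follows from Remark \ref{rmkShFest}, because the restriction is conjugate to $f_*$. For the quasi-attractor property, the key fact is that $f_*$ has arbitrarily short $l$-intervals adjacent to each endpoint of $I$: choosing $b$ in such an interval near $0$ yields an open forward-invariant neighborhood $[0,b)$ with $\overline{f_*([0,b))}\subset[0,b)$, and symmetrically at $1$. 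Gluing such short contracting prefixes at each fixed endpoint of an arc $a$, across all adjacent arcs in $Y$, produces an open $V\supset a$ with $\overline{g_*(V)}\subset V$. The only subtle case is the semicircle $a_0$ near $(-1,0)$, where branch points accumulate; here $V$ includes, beyond $a_0$, all $v_n\cup h_n$ for $n$ large enough (each globally $g_*$-invariant), plus a short forward-trapping prefix of the first horizontal piece crossing into the ``outside'', so that $V$ is open at its new boundary point and $(-1,0)$ is an interior point.

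Since shadowing is a conjugacy invariant and the conjugacy class constructed is residual in $\mathcal{H}(Y)$, the genericity conclusion follows. I expect the main obstacle to be the rigidity step in (3): while the endpoints of $Y$ and $(-1,0)$ are immediately topologically distinguishable, pinning down $(1,0)$ among the infinitely many similar $3$-branch points relies precisely on the semicircular free-arc bridge $a_0$ to $(-1,0)$, and propagating the fixed-point information to all branch points and all arcs relies on the well-ordered accumulation structure of the sequence $(x_n,0)$ along $H$.
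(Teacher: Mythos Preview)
Your proposal is correct and follows essentially the same strategy as the paper: verify the hypotheses of Proposition~\ref{propElGenericoUniDim} for $Y$, take the representative $g_*$ of Remark~\ref{rmkRepGen}, and apply Proposition~\ref{propCondSufSh} using that each arc is a quasi-attractor on which $g_*$ is conjugate to $f_*$. You supply considerably more detail than the paper does---in particular for condition~(3) (where the paper argues inductively from the single distinguished point $\tilde p=(-1,0)$ via a figure) and for the quasi-attractor property near $(-1,0)$ (which the paper dispatches with ``by construction''); one minor slip is that near the right endpoint $1$ you need $r$-intervals rather than $l$-intervals for the trapping neighborhood, but since $f_*$ has both types arbitrarily close to each endpoint your ``symmetrically at $1$'' is justified.
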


\begin{proof}
The continuum $Y$ satisfies the hypothesis of Proposition \ref{propElGenericoUniDim}. 
Indeed, the conditions $(1)$ and $(2)$ are direct from the construction of $Y$. 
Consider the points $p_n,\tilde p$ indicated in Figure \ref{figConjuntoX}. 
It is clear that $\tilde p \in\fix(f)$ for all $f\in\mathcal{H}(Y)$. 
This implies that $a_1$ is invariant and $p_2\in\fix(f)$. 
In turn, this implies that $a_2$ is invariant under each $f\in\mathcal{H}(Y)$. 
In this way it is shown that condition 
$(3)$ of Proposition \ref{propElGenericoUniDim} holds. 
Therefore, $\mathcal{H}(Y)$ contains a $G_\delta$ dense conjugacy class.

As explained in Remark \ref{rmkRepGen}, a representative $g_*\in\mathcal{H}(Y)$ of this conjugacy class is obtained by taking a conjugate 
of $f_*$ on each arc $a_n$. 
It only remains to prove that $g_*$ has the shadowing property. 
By Remark \ref{rmkShFest} we know that $g_*\colon a_n\to a_n$ has the shadowing property. 
By construction, each $a_n$ is a quasi-attractor for $g_*$. Since the arcs $a_n$ cover $Y$ we can apply 
Proposition \ref{propCondSufSh} to conclude that $g_*$ has the shadowing property.
\end{proof}

\end{document}